\documentclass[11pt,a4paper]{amsart}
\usepackage{graphicx,multirow,array,amsmath,amssymb}
\usepackage{arydshln}

\newtheorem{theorem}{Theorem}
\newtheorem{proposition}[theorem]{Proposition}
\newtheorem{lemma}[theorem]{Lemma}
\newtheorem{corollary}[theorem]{Corollary}

\newtheorem*{example}{Example}

\begin{document}

\title[Fox $p$-Colorings as Fixed Points of Braid Representations]
{Fox $p$-Colorings as Fixed Points of Braid Representations}

\author[N. Goh]{Neungjoo Goh}
\address{Department of Mathematics Education, Sunchon National University, Sunchon 57922, Korea}
\email{rhsmdwn2912@gmail.com}

\author[S. Jeong]{Suah Jeong}
\address{Department of Mathematics Education, Sunchon National University, Sunchon 57922, Korea}
\email{wjtn1203@naver.com}

\author[S. Oh]{Seungbin Oh}
\address{Department of Mathematics Education, Sunchon National University, Sunchon 57922, Korea}
\email{osboyr5901@naver.com}

\author[H. Yoo]{Hyungkee Yoo}
\address{Department of Mathematics Education, Sunchon National University, Sunchon 57922, Korea}
\email{hyungkee@scnu.ac.kr}

\keywords{tricolorability, torus link, spiral link, braid}
\thanks{Mathematics Subject Classification 2020: 57K10}

\maketitle

\begin{abstract}
Fox $p$-colorings of knots and links admit a linear-algebraic
description in terms of braid representations.
For each braid $\beta \in B_n$ we associate a representation
$M : B_n \to \mathrm{GL}(n,\mathbb{F}_p)$
such that Fox $p$-colorings of the closed braid $\widehat{\beta}$
correspond to fixed points of $M(\beta)$.
As a consequence,
$$
\mathrm{Col}_p(\widehat{\beta})
=
p^{\dim \ker(M(\beta)-I)},
$$
yielding an explicit formula for the number of Fox $p$-colorings
of arbitrary knots and links.
We apply the method to torus links and spiral links,
obtaining explicit descriptions of their Fox 3-coloring spaces.
\end{abstract}

\section{Introduction}

Fox $n$-coloring is one of the most classical and accessible invariants
in knot theory \cite{CF, F}.
Given a diagram of a knot or link, a Fox $n$-coloring assigns to each arc
an element of $\mathbb{Z}_n$ so that at every crossing
$$
2x_{\mathrm{over}} \equiv x_{\mathrm{under}_1}
+ x_{\mathrm{under}_2} \pmod n.
$$
A coloring in which all arcs receive the same value is called trivial.

For a link $K$, we denote by
$$
\mathrm{Col}_n(K)
$$
the number of distinct Fox $n$-colorings of $K$.
It is well known that $\mathrm{Col}_n(K)$ is a link invariant.
The set of Fox $n$-colorings of a diagram forms a finite abelian group,
and Reidemeister moves induce natural isomorphisms between coloring groups.
In particular, constant colorings form a subgroup isomorphic to $\mathbb{Z}_n$,
so that
$$
n \mid \mathrm{Col}_n(K).
$$

Fox colorings admit an algebraic interpretation as representations
of the link group into the dihedral group $D_n$ \cite{F}.
When $K$ is a knot and $p$ is prime,
the determinant completely determines the number of Fox $p$-colorings:
$$
\mathrm{Col}_p(K)=p\,\gcd(p,\det(K)).
$$
Thus the existence of nontrivial colorings is governed by the Alexander polynomial.
However, while the determinant determines the cardinality,
it does not provide an explicit linear description of the coloring space itself.

It is natural to ask whether Fox $p$-colorings can be realized directly
as the fixed-point space of a concrete linear transformation of small dimension.
Classical diagrammatic matrices, such as the Goeritz matrix
\cite{Goe, Pr},
typically have size comparable to the number of arcs or crossings in a diagram.
In contrast, a braid representation of a link has dimension equal to the number of strands of the chosen braid,
which may remain fixed even as the crossing number grows.
This suggests that a braid-based formulation can provide a compact
and structurally transparent linear model for Fox colorings.

Every link can be represented as the closure of a braid \cite{A}.
To each braid generator we associate a local linear transformation
over the finite field $\mathbb{F}_p$ encoding the Fox coloring relation.
Extending multiplicatively yields a representation
$$
M \colon B_n \longrightarrow \mathrm{GL}(n,\mathbb{F}_p).
$$
The representation $M$ may be viewed as the unreduced Burau representation~\cite{Bu}
specialized at $t=-1$ and reduced modulo $p$.
Our emphasis, however, is not on representation theory itself,
but on the fixed-point interpretation that this specialization provides.

Under this construction, Fox $p$-colorings of the closed braid
$K=\widehat{\beta}$ correspond precisely to fixed points of the matrix $M(\beta)$.
Thus counting Fox $p$-colorings becomes a problem in elementary linear algebra:
we compute the dimension of the eigenspace of $M(\beta)$
corresponding to the eigenvalue $1$.
This reformulation replaces a system of diagrammatic congruences
by the study of a single linear operator.

Our main result is the following fixed-point formula.

\begin{theorem} \label{thm:main}
Let $K=\widehat{\beta}$ be the closure of a braid
$\beta\in B_n$.
Then
$$
\mathrm{Col}_p(K)
=
p^{\dim \ker(M(\beta)-I)}.
$$
\end{theorem}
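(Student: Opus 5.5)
We work with the braid diagram of $\beta=\sigma_{i_1}^{\epsilon_1}\cdots\sigma_{i_k}^{\epsilon_k}$ read from top to bottom. We cut it into horizontal levels $0,1,\dots,k$, one between consecutive crossings. At level $j$ the $n$ strands carry colors, which we record as a vector $v_j\in\mathbb{F}_p^n$. The local matrices are fixed by the Fox rule. At a crossing $\sigma_i^{\pm1}$, positions $i,i+1$ change and all other entries are untouched. The over-strand keeps its color $a$ and moves to the other position. The under-strand, colored $b$ above, receives color $2a-b$ below. For $\sigma_i$ this gives $(x_i,x_{i+1})\mapsto(2x_i-x_{i+1},\,x_i)$ or its mirror, depending on the convention. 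The inverse generator gives the inverse map. This is the unreduced Burau matrix at $t=-1$, and it is invertible, as the paper states. We set $M(\beta)=M(\sigma_{i_k}^{\epsilon_k})\cdots M(\sigma_{i_1}^{\epsilon_1})$, or the reverse order, according to whether we use column or row vectors. Compatibility with the braid relations is what makes $M$ a representation, but the counting argument does not need it.

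\textbf{Step 1 (open diagram).} A Fox coloring of the open braid diagram is determined by its top vector $v_0$, and every $v_0$ extends. This holds because each arc of the open diagram meets some level. We proceed by induction on $j$. At a crossing, the over-arc and the two arcs at the untouched positions continue through unchanged. The only new arc is the lower piece of the under-strand. The crossing relation forces its color to be $2a-b$, and this choice satisfies the relation. Hence the colorings of the open diagram are in bijection with $\mathbb{F}_p^n$ via $v_0$, and the bottom vector is $v_k=M(\beta)v_0$.

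\textbf{Step 2 (closure).} The diagram of $\widehat\beta$ is obtained by joining bottom endpoint $m$ to top endpoint $m$ with arcs that cross nothing. An arc of $\widehat\beta$ therefore corresponds to arcs of the open diagram glued across the closure strands, and no new crossings appear. So a Fox coloring of $\widehat\beta$ is exactly a coloring of the open diagram in which, for every $m$, the arc ending at the bottom in position $m$ and the arc starting at the top in position $m$ have the same color. In terms of vectors this says $v_k=v_0$, that is, $M(\beta)v_0=v_0$.

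\textbf{Step 3 (counting).} The map sending a coloring to $v_0$ is therefore a bijection from the colorings of $\widehat\beta$ onto $\ker(M(\beta)-I)\subseteq\mathbb{F}_p^n$. This kernel is an $\mathbb{F}_p$-vector space, so it has $p^{\dim\ker(M(\beta)-I)}$ elements. Since $\mathrm{Col}_p$ is a link invariant, the count does not depend on which braid diagram represents $K$.

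The main obstacle is the bookkeeping in Steps 1 and 2. We must check that distinct arcs are never forced to share a color in a hidden way, and that two level pieces of the same arc always receive equal values. This is handled by labelling each strand segment between consecutive levels, not each arc. The Fox relation then says that an over-strand segment keeps its value across the crossing, so the segments making up one arc automatically carry equal colors. Colorings of segments subject to these constraints correspond exactly to arc colorings. We also fix the orientation and product-order convention once, so that the bottom vector really is $M(\beta)v_0$ and not its transpose or inverse. Either alternative has a fixed space of the same dimension, so the formula is robust to this choice.
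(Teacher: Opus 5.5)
Your proof is correct and follows the same route as the paper: you propagate the top labels linearly through the braid so the bottom vector is $M(\beta)v_0$, the closure imposes $M(\beta)v_0=v_0$, and you count the $\mathbb{F}_p$-kernel. Your level-by-level bookkeeping adds two points the paper's proof only asserts in a line: that every top vector extends uniquely to a coloring of the open braid, and that the closure strands cross nothing, so they only glue arcs.
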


This description applies uniformly to knots and links.
In subsequent sections, we develop this matrix formulation in detail
and apply it to families of links with structured braid descriptions,
including torus links and spiral links.

\section{A matrix representation associated to Fox colorings}

In this section we construct a matrix formulation of Fox $p$-colorings
using braid representations.
Throughout, braids are read from top to bottom.

Let $\beta\in B_n$ be a braid on $n$ strands.
Label the strands at the top of $\beta$ by a vector
$$
(a_1,a_2,\dots,a_n)\in \mathbb{F}_p^n.
$$
As the strands pass through the braid, these labels propagate
according to the Fox coloring rule at each crossing.
Since the coloring condition is linear, the labels at the bottom
are given by a vector
$$
(b_1,b_2,\dots,b_n)\in \mathbb{F}_p^n
$$
which depends linearly on the initial labels.
Thus each braid induces a linear transformation
$$
\mathbb{F}_p^n \longrightarrow \mathbb{F}_p^n.
$$

We now describe this transformation for the elementary braid generators.
Let $\sigma_i\in B_n$ be the standard generator corresponding to a crossing
between the $i$-th and $(i+1)$-st strands as drawn in Figure~\ref{fig:ele_braid}.

\begin{figure}[h!] 
\centering 
\includegraphics{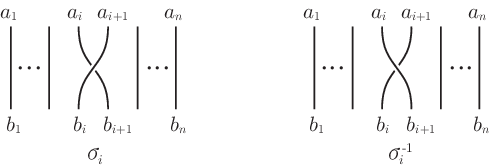} 
\caption{Elementary braids and the induced Fox coloring relations.} 
\label{fig:ele_braid} 
\end{figure}

If the labels immediately above the crossing are
$(a_1,\dots,a_n)$, then applying the Fox coloring condition gives
$$
b_i=a_{i+1}, \qquad
b_{i+1}=2a_{i+1}-a_i,
$$
and $b_j=a_j$ for $j\neq i,i+1$.
Equivalently,
$$
\begin{pmatrix}
b_1\\ \vdots\\ b_i\\ b_{i+1}\\ \vdots\\ b_n
\end{pmatrix}
=
\begin{pmatrix}
I_{i-1} & 0 & 0\\[2pt]
0 &
\begin{pmatrix}
0 & 1\\
-1 & 2
\end{pmatrix}
& 0\\[2pt]
0 & 0 & I_{n-i-1}
\end{pmatrix}
\begin{pmatrix}
a_1\\ \vdots\\ a_i\\ a_{i+1}\\ \vdots\\ a_n
\end{pmatrix}.
$$
We denote this matrix by $M(\sigma_i)\in \mathrm{GL}(n,\mathbb{F}_p)$.

Similarly, for the inverse generator $\sigma_i^{-1}$,
the Fox coloring rule yields
$$
b_i=2a_i-a_{i+1}, \qquad
b_{i+1}=a_i,
$$
with all other coordinates unchanged.
Hence
$$
\begin{pmatrix}
b_1\\ \vdots\\ b_i\\ b_{i+1}\\ \vdots\\ b_n
\end{pmatrix}
=
\begin{pmatrix}
I_{i-1} & 0 & 0\\[2pt]
0 &
\begin{pmatrix}
2 & -1\\
1 & 0
\end{pmatrix}
& 0\\[2pt]
0 & 0 & I_{n-i-1}
\end{pmatrix}
\begin{pmatrix}
a_1\\ \vdots\\ a_i\\ a_{i+1}\\ \vdots\\ a_n
\end{pmatrix},
$$
which we denote by $M(\sigma_i^{-1})$.
One verifies directly that $M(\sigma_i^{-1})=M(\sigma_i)^{-1}$.

The assignments $\sigma_i\mapsto M(\sigma_i)$
are compatible with the braid relations:
$$
M(\sigma_i)M(\sigma_{i+1})M(\sigma_i)
=
M(\sigma_{i+1})M(\sigma_i)M(\sigma_{i+1}),
$$
and $M(\sigma_i)$ commutes with $M(\sigma_j)$ whenever $|i-j|\ge 2$.
Thus the construction extends multiplicatively to a group homomorphism
$$
M \colon B_n \longrightarrow \mathrm{GL}(n,\mathbb{F}_p).
$$

If $\gamma$ is placed below $\beta$
so that strands pass through $\beta$ first and then through $\gamma$,
then the induced linear maps satisfy
$$
M(\beta\gamma)=M(\gamma)M(\beta).
$$
Equivalently, if $\beta=\sigma_{i_1}\sigma_{i_2}\cdots\sigma_{i_k}$ is read from
top to bottom, then
$$
M(\beta)=M(\sigma_{i_k})\cdots M(\sigma_{i_2})M(\sigma_{i_1}).
$$

Finally, observe that the vector $(1,1,\dots,1)\in \mathbb{F}_p^n$
is fixed by each $M(\sigma_i)$,
reflecting the trivial Fox coloring.
For a closed braid $\widehat{\beta}$,
a Fox $p$-coloring corresponds precisely to a vector
$a\in\mathbb{F}_p^n$ satisfying
$$
M(\beta)a=a,
$$
that is, to a vector in $\ker(M(\beta)-I)$.
This fixed-point interpretation forms the basis of
Theorem~\ref{thm:main}.

\section{Proof of the main results}

In this section we prove Theorem~\ref{thm:main} and its consequence,
Corollary~\ref{cor:power}, using the matrix representation $M$ constructed in the
previous section.
The key observation is that Fox $p$-colorings of a closed braid correspond exactly
to fixed points of the associated linear action.

\begin{proof}[Proof of Theorem~\ref{thm:main}]
Let $\beta\in B_n$ be a braid and label the strands at the top of $\beta$
by a vector $a=(a_1,\dots,a_n)\in\mathbb{F}_p^n$.
As described in Section~2, the Fox coloring condition at each crossing
propagates these labels linearly along the braid,
so that the labels at the bottom are given by $M(\beta)a$.

If $K=\widehat{\beta}$ denotes the closure of $\beta$,
then the top and bottom endpoints of each strand are identified.
Hence a Fox $p$-coloring of $K$ corresponds precisely to a vector
$a\in\mathbb{F}_p^n$ satisfying
$M(\beta)a=a$,
that is,
$$
(M(\beta)-I)a=0.
$$
Therefore the set of Fox $p$-colorings of $K$
is naturally identified with the kernel
$\ker(M(\beta)-I)$.

Since this kernel is a vector space over $\mathbb{F}_p$
of dimension $\dim_{\mathbb{F}_p}\ker(M(\beta)-I)$,
its cardinality is
$$
p^{\dim_{\mathbb{F}_p}\ker(M(\beta)-I)}.
$$
This proves
$$
\mathrm{Col}_p(K)
=
p^{\dim \ker(M(\beta)-I)}.
$$

Finally, the trivial coloring corresponds to the vector
$(1,1,\dots,1)\in\mathbb{F}_p^n$,
which is fixed by each $M(\sigma_i)$ and hence by $M(\beta)$.
Thus $K$ admits a nontrivial Fox $p$-coloring
if and only if
$$
\dim_{\mathbb{F}_p}\ker(M(\beta)-I)\ge 2.
$$
\end{proof}

Theorem~\ref{thm:main} shows that the entire space of Fox $p$-colorings
is determined by the fixed-point structure of the braid action.
This fixed-point interpretation allows immediate algebraic consequences.

An immediate algebraic consequence of Theorem~\ref{thm:main}
follows from the finiteness of $\mathrm{GL}(n,\mathbb{F}_p)$.

\begin{corollary}\label{cor:power}
For any braid $\beta\in B_n$ and any prime $p$, there exists a positive integer $m$
such that $M(\beta)^m=I$.
\end{corollary}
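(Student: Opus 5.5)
The plan is to use only that $\mathrm{GL}(n,\mathbb{F}_p)$ is a finite group together with the fact, established in Section~2, that $M(\beta)$ lies in this group. Every generator matrix $M(\sigma_i)$ and $M(\sigma_i^{-1})$ is invertible over $\mathbb{F}_p$: each has determinant $0\cdot 2-1\cdot(-1)=1$ in its $2\times 2$ block, and $M(\sigma_i^{-1})=M(\sigma_i)^{-1}$. Writing $\beta$ as a word in the generators and their inverses, $M(\beta)$ is a product of invertible matrices. Hence $A:=M(\beta)\in\mathrm{GL}(n,\mathbb{F}_p)$; in fact $A\in\mathrm{SL}(n,\mathbb{F}_p)$, though we will not need this.

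Next, I will observe that $\mathrm{GL}(n,\mathbb{F}_p)$ is finite. It is a subset of the set of all $n\times n$ matrices over $\mathbb{F}_p$, which has $p^{n^2}$ elements. Consider the powers $A, A^2, A^3,\dots$. By the pigeonhole principle there exist integers $1\le k<l$ with $A^k=A^l$. Multiplying by $A^{-k}$, which exists because $A$ is invertible, gives $A^{l-k}=I$. Take $m=l-k\ge 1$.

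As an alternative, one can simply cite Lagrange's theorem: the cyclic subgroup $\langle A\rangle$ of the finite group $\mathrm{GL}(n,\mathbb{F}_p)$ has finite order $m$ dividing $|\mathrm{GL}(n,\mathbb{F}_p)|=\prod_{j=0}^{n-1}(p^n-p^j)$. So one may even take $m=|\mathrm{GL}(n,\mathbb{F}_p)|$ uniformly for all $\beta\in B_n$.

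There is no real obstacle here. The only point requiring care is that invertibility is what allows cancellation from $A^k=A^l$ down to $A^{l-k}=I$. Without it one would only get eventual periodicity, not a return to the identity. That invertibility is supplied by the homomorphism $M\colon B_n\to\mathrm{GL}(n,\mathbb{F}_p)$ constructed in Section~2.
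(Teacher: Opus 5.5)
Your proposal is correct and takes the same route as the paper: $M(\beta)$ lies in the finite group $\mathrm{GL}(n,\mathbb{F}_p)$, so it has finite order. Your pigeonhole and Lagrange arguments, including the uniform choice $m=|\mathrm{GL}(n,\mathbb{F}_p)|$, spell out what the paper states in one line.
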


\begin{proof}
Fix a braid $\beta\in B_n$ and a prime $p$.
Since $\mathrm{GL}(n,\mathbb{F}_p)$ is a finite group, the element $M(\beta)$ has finite
order.
Hence there exists a positive integer $m$ such that
$$
M(\beta)^m=I.
$$
Equivalently, $M(\beta^m)=I$, and hence
$$
\ker(M(\beta^m)-I)=\mathbb{F}_p^n.
$$
The claim follows from Theorem~\ref{thm:main}.
\end{proof}

Corollary~\ref{cor:power} shows that for every braid $\beta$
there exists a positive integer $m$ such that the closure of $\beta^m$
admits the maximal possible number of Fox $p$--colorings.
Indeed, when $M(\beta)^m=I$ we obtain
$$
\ker(M(\beta^m)-I)=\mathbb{F}_p^n,
$$
so that
$$
\mathrm{Col}_p(\widehat{\beta^m})=p^n.
$$
The following corollary shows that the dimensions of the fixed-point spaces
$\ker(M(\beta)^m-I)$ are invariant under braid conjugation.

\begin{corollary}\label{cor:conj_kernel}
Let $\beta,\gamma\in B_n$ and let $p$ be a prime.
Then for every integer $m\ge1$,
$$
\dim\ker\!\bigl(M(\beta)^m-I\bigr)
=
\dim\ker\!\bigl(M((\gamma\beta\gamma^{-1})^m)-I\bigr).
$$
\end{corollary}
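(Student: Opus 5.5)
The argument is pure linear algebra: conjugation in $B_n$ becomes conjugation of matrices under $M$, and similar matrices have eigenvalue-$1$ eigenspaces of the same dimension.

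\emph{Step 1: conjugate at the matrix level.} Since $M$ is a group homomorphism (with the order reversal $M(\beta\gamma)=M(\gamma)M(\beta)$ noted in Section~2), we get
$$
M\bigl((\gamma\beta\gamma^{-1})^m\bigr)=M(\gamma\beta^m\gamma^{-1})=M(\gamma)^{-1}M(\beta)^m M(\gamma).
$$
This uses $(\gamma\beta\gamma^{-1})^m=\gamma\beta^m\gamma^{-1}$, $M(\gamma^{-1})=M(\gamma)^{-1}$ and $M(\beta^m)=M(\beta)^m$. The reversed order only swaps which factor carries the inverse, so the product is still a conjugate of $M(\beta)^m$ by an element $P=M(\gamma)^{\pm1}\in\mathrm{GL}(n,\mathbb{F}_p)$.

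\emph{Step 2: compare kernels.} Write $A=M(\beta)^m$. Then $P^{-1}AP-I=P^{-1}(A-I)P$. The invertible map $v\mapsto P^{-1}v$ carries $\ker(A-I)$ isomorphically onto $\ker(P^{-1}AP-I)$: if $(A-I)v=0$, then $P^{-1}(A-I)P(P^{-1}v)=0$, and the reverse direction is symmetric. So the two kernels have equal $\mathbb{F}_p$-dimension. Equivalently, the ranks of $A-I$ and $P^{-1}(A-I)P$ coincide.

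\emph{Step 3: interpret the result.} By Theorem~\ref{thm:main}, Step 2 shows that $\widehat{\beta^m}$ and $\widehat{(\gamma\beta\gamma^{-1})^m}$ have the same number of Fox $p$-colorings. This is consistent with these closures being isotopic links.

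The only real care point is Step 1. Because of the top-to-bottom composition convention, one must check that the order reversal still produces a conjugate rather than some other product. It does, since $M(\gamma\beta^m\gamma^{-1})=M(\gamma^{-1})M(\beta^m)M(\gamma)$. Everything else is routine.
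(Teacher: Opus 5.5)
Your proof is correct and follows the paper's argument. Both write $M((\gamma\beta\gamma^{-1})^m)=P^{-1}M(\beta)^mP$ with $P=M(\gamma)$, and both conclude from the similarity of $P^{-1}(A^m-I)P$ and $A^m-I$ that the two kernels have the same dimension. The only differences are cosmetic: you move the power inside the braid group first, where the paper computes $(P^{-1}AP)^m$ at the matrix level, and you write out the kernel isomorphism explicitly.
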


\begin{proof}
Set $A=M(\beta)$ and $P=M(\gamma)\in \mathrm{GL}(n,\mathbb{F}_p)$.
With the convention $M(\beta\gamma)=M(\gamma)M(\beta)$, we have
$$
M(\gamma\beta\gamma^{-1})
=
P^{-1}AP.
$$
Hence
$$
M(\gamma\beta\gamma^{-1})^m
=
(P^{-1}AP)^m
=
P^{-1}A^mP.
$$
Therefore
$$
M(\gamma\beta\gamma^{-1})^m-I
=
P^{-1}(A^m-I)P.
$$
Since $P$ is invertible, the matrices
$P^{-1}(A^m-I)P$ and $A^m-I$ are similar,
and hence their kernels have the same dimension.
\end{proof}

\section{Tricolorability of torus links}

Since tricolorability corresponds to the case $p=3$, it provides a natural and
computationally accessible specialization of the general theory developed above.
In this section we apply the braid-matrix criterion to study tricolorability for
several families of knots and links whose braid representations are particularly
well suited to explicit computations.

\begin{figure}[h!]
	\centering
	\includegraphics{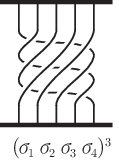}
	\caption{Torus link}
	\label{fig:torus}
\end{figure}

Recall that a \emph{torus knot} is a knot that lies on the surface of an unknotted
torus in $\mathbb{R}^3$.
More generally, a \emph{torus link} is a link embedded in the surface of a torus in
the same manner.
A torus link of type $T_{r,s}$ admits a standard braid representation of $r$ strings, while its crossing number grows with $|s|$.
This makes torus links a natural testing ground for braid-based coloring criteria.

For torus knots, the Alexander polynomial is well known,
and therefore the number of Fox $p$-colorings
can be computed directly from the determinant.
Hence the knot case is completely understood.
We therefore focus on torus links.
Recall that the torus link $T_{r,s}$
admits the standard braid representation
$$
T_{r,s}=\widehat{(\sigma_1 \sigma_2 \cdots \sigma_{r-1})^s}
\in B_r.
$$
Since $T_{r,s}=T_{s,r}$,
we may assume $1<r \le s$.
By Theorem~\ref{thm:main}, we have
$$
\mathrm{Col}_3(T_{r,s})
=
3^{\dim \ker(M((\sigma_1\cdots\sigma_{r-1})^s)-I)}.
$$
In particular,
$$
\mathrm{Col}_3(T_{r,s}) \le 3^r,
$$
since the number of strings is $r$.

We are interested in determining
for which values of $s$ equality holds,
that is,
$$
\mathrm{Col}_3(T_{r,s}) = 3^r.
$$
Equivalently, we give explicit exponents $s$
for which
$$
M((\sigma_1\cdots\sigma_{r-1})^s)=I.
$$

\begin{lemma}\label{lem:delta_matrix}
Let $\delta_r=\sigma_1\sigma_2\cdots\sigma_{r-1}\in B_r$
and let $A=M(\delta_r)\in\mathrm{GL}(r,\mathbb{F}_3)$.
Then $A$ has the following explicit form:

$$
A=
\begin{pmatrix}
0&1&0&\cdots&0\\
0&0&1&\cdots&0\\
\vdots&&&\ddots&\vdots\\
0&0&0&\cdots&1\\
\ast&\ast&\ast&\cdots&\ast
\end{pmatrix},
$$

where the first $r-1$ rows form the standard shift matrix, and the last row is given by
$$
A_{r,1}=(-1)^{r+1}, \qquad
A_{r,j}=2(-1)^{\,r-j} \quad (2\le j\le r)
$$
in $\mathbb{F}_3$.
\end{lemma}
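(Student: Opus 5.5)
The plan is to compute $A=M(\delta_r)$ directly from its action on a vector, rather than multiplying the matrices symbolically. By the convention of Section~2, reading $\delta_r=\sigma_1\sigma_2\cdots\sigma_{r-1}$ from top to bottom gives
$$
A=M(\sigma_{r-1})\cdots M(\sigma_2)M(\sigma_1).
$$
So we start with a label vector $a=(a_1,\dots,a_r)$ and apply the crossings $\sigma_1,\sigma_2,\dots,\sigma_{r-1}$ in this order, using the rule $b_i=a_{i+1}$, $b_{i+1}=2a_{i+1}-a_i$ at each step. The computation works over $\mathbb{Z}$, and we reduce modulo $3$ at the end.

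The key step is the following claim, proved by induction on $k$. After applying $\sigma_1,\dots,\sigma_k$, the label vector is
$$
(a_2,a_3,\dots,a_{k+1},\,c_{k+1},\,a_{k+2},\dots,a_r),
$$
where $c_1=a_1$ and $c_{k+1}=2a_{k+1}-c_k$. The inductive step is immediate. The crossing $\sigma_{k+1}$ acts only on positions $k+1$ and $k+2$, which currently hold $c_{k+1}$ and $a_{k+2}$. It places $a_{k+2}$ in position $k+1$ and $2a_{k+2}-c_{k+1}=c_{k+2}$ in position $k+2$, and leaves the other entries untouched. Taking $k=r-1$ shows that $b_i=a_{i+1}$ for $1\le i\le r-1$. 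This gives the shift-matrix form of the first $r-1$ rows. It also shows that $b_r=c_r$.

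It remains to unwind the recursion for $c_r$. Iterating $c_r=2a_r-c_{r-1}$ gives
$$
c_r=\sum_{j=2}^{r}2(-1)^{r-j}a_j+(-1)^{r-1}c_1 .
$$
This can be checked by a second short induction on $r$: substituting the formula for $c_{r-1}$ into $2a_r-c_{r-1}$ flips every sign and adds the term $2a_r$. Since $c_1=a_1$ and $(-1)^{r-1}=(-1)^{r+1}$, we obtain $A_{r,1}=(-1)^{r+1}$ and $A_{r,j}=2(-1)^{r-j}$ for $2\le j\le r$. These identities hold over $\mathbb{Z}$, so they hold in $\mathbb{F}_3$ after reduction.

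The only real obstacle is bookkeeping. One must apply the matrices in the right order, with $M(\sigma_1)$ acting first. One must also track that the nontrivial value always sits in the position that the next generator acts on. With the opposite multiplication order, one would get a different companion-type matrix, with the special row or column in a different place.
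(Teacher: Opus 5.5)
Your proof is correct and follows essentially the same route as the paper: a direct computation of $M(\sigma_{r-1})\cdots M(\sigma_1)$ by tracking values through the factors, with the alternating signs and factors of $2$ in the last row coming from iterating $c_{k+1}=2a_{k+1}-c_k$. Your version is more explicit than the paper's sketch. The invariant $(a_2,\dots,a_{k+1},c_{k+1},a_{k+2},\dots,a_r)$ yields the shift rows and the last row in one induction, whereas the paper's ``tracking the image of $e_r$'' only produces the last row if it is read as right-multiplying the row vector $e_r^{\mathsf{T}}$.
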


\begin{proof}
Set $\delta_r=\sigma_1\sigma_2\cdots\sigma_{r-1}$.
With the convention in Section~2, we have
$$
A=M(\delta_r)=M(\sigma_{r-1})\cdots M(\sigma_2)M(\sigma_1).
$$
Recall that each $M(\sigma_i)$ replaces $(a_i,a_{i+1})$ by
$(a_{i+1},2a_{i+1}-a_i)$ and fixes the other coordinates.

Since each $M(\sigma_i)$ only modifies coordinates $i$ and $i+1$,
a straightforward induction on $r$ shows that the first $r-1$ rows of $A$
form the shift matrix
$$
A_{i,i+1}=1 \quad (1\le i\le r-1),
\qquad
A_{i,j}=0 \text{ otherwise for } i<r.
$$

The last row can be obtained by tracking the image of the basis vector
$e_r$ under successive applications of $M(\sigma_{r-1}),\dots,M(\sigma_1)$.
Each step introduces alternating signs and factors of $2$,
and a direct computation yields
$$
A_{r,1}=(-1)^{r+1}, \qquad
A_{r,j}=2(-1)^{\,r-j} \quad (2\le j\le r),
$$
as claimed.
\end{proof}

To illustrate the explicit form of the matrix $A=M(\delta_r)$,
we compute it directly for small values of $r$.

\begin{example}
For $r=3$, we have $\delta_3=\sigma_1\sigma_2$ and
$$
A=M(\sigma_2)M(\sigma_1)
=
\begin{pmatrix}
0&1&0\\
0&0&1\\
1&1&2
\end{pmatrix}
\in \mathrm{GL}(3,\mathbb{F}_3).
$$

For $r=4$, we obtain
$$
A=
\begin{pmatrix}
0&1&0&0\\
0&0&1&0\\
0&0&0&1\\
2&2&1&2
\end{pmatrix}
\in \mathrm{GL}(4,\mathbb{F}_3),
$$
which agrees with the general formula for the last row.
\end{example}

The above computations agree with the description in Lemma~\ref{lem:delta_matrix}.
We now make explicit the companion-matrix structure of $A$.

\begin{corollary}\label{cor:delta_companion}
With $A=M(\delta_r)$ as in Lemma~\ref{lem:delta_matrix},
the matrix $A$ is the companion matrix of a polynomial
$$
f_r(x)=x^r+c_{r-1}x^{r-1}+\cdots+c_1x+c_0
\in\mathbb{F}_3[x],
$$
where the coefficients are determined by
$$
-c_{j-1}=A_{r,j} \quad (1\le j\le r).
$$
In particular,
$$
f_r(A)=0.
$$
\end{corollary}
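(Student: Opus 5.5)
The plan is to read the statement directly off the shape established in Lemma~\ref{lem:delta_matrix}: the first $r-1$ rows of $A$ form the shift matrix and the last row is arbitrary. This is exactly the companion-matrix form in the row convention, so two things need to be checked. First, that the characteristic polynomial of $A$ is $f_r(x)=x^r+c_{r-1}x^{r-1}+\cdots+c_0$ with $-c_{j-1}=A_{r,j}$. Second, that $f_r(A)=0$. The second follows from the first by Cayley--Hamilton, but a direct argument is also available and avoids any sign bookkeeping in the determinant.

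The direct argument runs as follows. Let $e_1,\dots,e_r$ be the standard row vectors. Because $A_{i,i+1}=1$ and the other entries in row $i<r$ vanish, we have $e_iA=e_{i+1}$ for $1\le i\le r-1$. Hence $e_1A^{k}=e_{k+1}$ for $0\le k\le r-1$. Applying $A$ once more gives
$$
e_1A^{r}=e_rA=\sum_{j=1}^{r}A_{r,j}e_j=\sum_{j=1}^{r}A_{r,j}\,e_1A^{j-1}=-\sum_{j=1}^{r}c_{j-1}\,e_1A^{j-1}.
$$
This is exactly the statement $e_1f_r(A)=0$. Since $A$ commutes with $f_r(A)$, it follows that $e_{k+1}f_r(A)=e_1A^kf_r(A)=e_1f_r(A)A^k=0$ for every $k$. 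So every row of $f_r(A)$ is zero, and $f_r(A)=0$.

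To identify $f_r$ as the characteristic polynomial, I would expand $\det(xI-A)$ along the last row. Each cofactor is the determinant of a block matrix built from a bidiagonal block with $x$ on the diagonal and $-1$ on the superdiagonal, and these evaluate to $\pm x^{j-1}$. Collecting the signs gives $x^r-\sum_j A_{r,j}x^{j-1}$. Alternatively, since $e_1$ is a cyclic vector, the minimal polynomial has degree $r$. It divides $f_r$, which is monic of degree $r$, so it equals $f_r$, and hence $f_r$ is also the characteristic polynomial.

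The only step requiring care is the sign and index bookkeeping, specifically matching $A_{r,j}$ with the coefficient of $x^{j-1}$. Since the cyclic-vector argument above gives $f_r(A)=0$ without computing any determinant, I expect no real obstacle. All computations take place over $\mathbb{F}_3$, with the entries given by Lemma~\ref{lem:delta_matrix}.
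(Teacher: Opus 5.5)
Your proof is correct, but it reaches $f_r(A)=0$ by a different route than the paper. The paper observes that the first $r-1$ rows of $A$ form the shift matrix. It then quotes the standard fact that such a companion matrix has characteristic polynomial $x^r+c_{r-1}x^{r-1}+\cdots+c_0$ with $-c_{j-1}=A_{r,j}$, and concludes $f_r(A)=0$, implicitly by Cayley--Hamilton.

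You verify $f_r(A)=0$ directly. The relations $e_1A^{k}=e_{k+1}$ for $k\le r-1$ and $e_rA=\sum_j A_{r,j}e_j$ give $e_1f_r(A)=0$. Because $f_r(A)$ commutes with $A$, this vanishing passes to every row. The paper's argument is shorter because it leans on two standard facts. Yours is self-contained and needs no determinant or sign bookkeeping.

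Your route also yields more. The independence of $e_1,e_1A,\dots,e_1A^{r-1}$ shows that $f_r$ is the minimal polynomial of $A$, not just an annihilating polynomial. This matters later in the paper. In Theorem~\ref{thm:torus_tri} only the implication $f_r(x)\mid x^N-1\Rightarrow A^N=I$ is used. With $f_r$ minimal, this becomes an equivalence: the order of $M(\delta_r)$ is exactly the multiplicative order of $x$ modulo $f_r$. Combined with Theorem~\ref{thm:main}, this characterizes all $s$ for which $\mathrm{Col}_3(T_{r,s})=3^r$, not merely the two families exhibited.
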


\begin{proof}
By Lemma~\ref{lem:delta_matrix},
the first $r-1$ rows of $A$ form the standard shift matrix.
Hence $A$ has the canonical companion form.

By definition of the companion matrix,
its characteristic polynomial equals
$$
x^r+c_{r-1}x^{r-1}+\cdots+c_0,
$$
where the coefficients satisfy
$$
-c_{j-1}=A_{r,j}.
$$
Therefore $A$ satisfies its defining polynomial,
that is, $f_r(A)=0$.
\end{proof}

The following theorem describes two infinite families
of torus links attaining the maximal possible number of
Fox $3$-colorings.

\begin{theorem}\label{thm:torus_tri}
Let $n$ be a positive integer.
\begin{enumerate}
\item For the torus link $T_{2n+1,\,4n+2}$,
$$
\mathrm{Col}_3\bigl(T_{2n+1,\,4n+2}\bigr)=3^{2n+1}.
$$
\item For the torus link $T_{2n,\,6n}$,
$$
\mathrm{Col}_3\bigl(T_{2n,\,6n}\bigr)=3^{2n}.
$$
\end{enumerate}
\end{theorem}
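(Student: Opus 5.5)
The plan is to reduce both statements to a polynomial divisibility in $\mathbb{F}_3[x]$, using the companion-matrix structure from Corollary~\ref{cor:delta_companion}. By Theorem~\ref{thm:main}, $\mathrm{Col}_3(T_{r,s})=3^r$ holds exactly when $M(\delta_r^s)=A^s=I$, where $A=M(\delta_r)$. Since $A$ is the companion matrix of $f_r$, its minimal polynomial equals $f_r$. So $A^s=I$ if and only if $f_r(x)\mid x^s-1$ in $\mathbb{F}_3[x]$. It therefore suffices to identify $f_r$ explicitly and check this divisibility for $s=2r$ when $r=2n+1$, and for $s=3r$ when $r=2n$.

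\textbf{Step 1: factor $f_r$.} I claim that, already over $\mathbb{Z}$ and hence over $\mathbb{F}_3$,
$$
f_r(x)=(x-1)\,\frac{x^r-(-1)^r}{x+1}=(x-1)\sum_{k=0}^{r-1}(-1)^{r-1-k}x^k .
$$
To check this, expand the product. The middle coefficients are $(-1)^{r-k}-(-1)^{r-1-k}=2(-1)^{r-k}$ for $1\le k\le r-1$. The leading coefficient is $1$, and the constant term is $(-1)^r$.

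Compare with Lemma~\ref{lem:delta_matrix} via $c_{j-1}=-A_{r,j}$. For $j=k+1\ge 2$ we get $c_k=-2(-1)^{r-k-1}=2(-1)^{r-k}$, which matches. For the constant term, $c_0=-A_{r,1}=-(-1)^{r+1}=(-1)^r$, which also matches.

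As a sanity check, the case $r=3$ gives $x^3+x^2+2x+2=(x-1)(x+1)^2$ in $\mathbb{F}_3[x]$, consistent with the Example.

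\textbf{Step 2, part (1): $r=2n+1$ odd.} Here $f_r=(x-1)\cdot\frac{x^r+1}{x+1}$. Write
$$
x^{2r}-1=(x^r-1)(x^r+1).
$$
Since $x-1\mid x^r-1$ and $\frac{x^r+1}{x+1}\mid x^r+1$, we get $f_r\mid x^{2r}-1$. Hence $A^{4n+2}=I$ and $\mathrm{Col}_3(T_{2n+1,4n+2})=3^{2n+1}$. This case works over $\mathbb{Z}$ and does not use characteristic $3$.

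\textbf{Step 3, part (2): $r=2n$ even.} Here $f_r=\frac{(x-1)(x^r-1)}{x+1}$. The key point is the Frobenius identity in characteristic $3$:
$$
x^{3r}-1=(x^r-1)^3 .
$$
The quotient
$$
\frac{(x^r-1)^3}{f_r}=(x+1)\,(x^r-1)\cdot\frac{x^r-1}{x-1}
$$
is a polynomial. Hence $f_r\mid x^{6n}-1$ in $\mathbb{F}_3[x]$, so $A^{6n}=I$ and $\mathrm{Col}_3(T_{2n,6n})=3^{2n}$.

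The only step requiring genuine care is Step 1, namely matching the closed form of $f_r$ against the sign conventions in the last row of Lemma~\ref{lem:delta_matrix}. Once that identification is made, both divisibilities are one-line factorizations.
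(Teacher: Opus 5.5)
Your proposal is correct and follows essentially the paper's proof: you identify $f_r=(x-1)\bigl(x^r-(-1)^r\bigr)/(x+1)$ from the last row in Lemma~\ref{lem:delta_matrix}, then show $f_r\mid x^{2r}-1$ for odd $r$ and, via the Frobenius identity $x^{3r}-1=(x^r-1)^3$, $f_r\mid x^{3r}-1$ for even $r$. The only difference is cosmetic: you verify the closed form by expanding the factored product and matching coefficients, whereas the paper sums a geometric series and reduces mod $3$. Your version has the small bonus, as you note, of showing that the odd case is independent of the characteristic.
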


\begin{proof}
We now prove explicit periodicity identities for $A=M(\delta_r)$.
By Lemma~\ref{lem:delta_matrix}, the matrix $A$ has companion form, and
Corollary~\ref{cor:delta_companion} shows that $A$ satisfies its defining
polynomial
$$
f_r(x)=x^r+c_{r-1}x^{r-1}+\cdots+c_1x+c_0 \in \mathbb{F}_3[x],
\qquad\text{namely } f_r(A)=0,
$$
where the coefficients are determined by the last row of $A$ via
$-c_{j-1}=A_{r,j}$.
In particular, in each parity case below we first rewrite this explicit
$f_r(x)$ in a convenient closed form and then show that $f_r(x)$ divides
$x^N-1$ for a suitable exponent $N$.
Evaluating at $A$ then gives $A^N=I_r$.

\smallskip
\noindent\emph{(1) The case $r$ odd.}
Let $a_0=-1$ and $a_k=(-1)^k$ for $1\le k\le r-1$.
By Lemma~\ref{lem:delta_matrix} and Corollary~\ref{cor:delta_companion},
the defining polynomial of $A=M(\delta_r)$ is
$$
f(x)=x^r+\sum_{k=1}^{r-1}(-1)^k x^k-1\in\mathbb{F}_3[x],
$$
and hence $f(A)=0$.

Since $r$ is odd, we compute
$$
\sum_{k=1}^{r-1}(-1)^k x^k=\sum_{k=1}^{r-1}(-x)^k
=\frac{-x(1-x^{r-1})}{1+x}.
$$
Hence
$$
f(x)=x^r-1+\frac{-x(1-x^{r-1})}{1+x}
=\frac{(x^r-1)(x+1)-x+x^r}{x+1}
=\frac{x^{r+1}+2x^r-2x-1}{x+1}.
$$
Over $\mathbb{F}_3$ we have $2\equiv -1$, so
$$
x^{r+1}+2x^r-2x-1 \equiv x^{r+1}-x^r+x-1=(x^r+1)(x-1),
$$
and therefore
$$
f(x)=\frac{(x^r+1)(x-1)}{x+1}.
$$
Since $r$ is odd, we have $(-1)^r+1=0$ in $\mathbb{F}_3$, hence $x+1$ divides
$x^r+1$ and the above expression indeed lies in $\mathbb{F}_3[x]$.

Now
$$
x^{2r}-1=(x^r-1)(x^r+1),
$$
so
$$
\frac{x^{2r}-1}{f(x)}
=
\frac{(x^r-1)(x^r+1)}{(x^r+1)(x-1)/(x+1)}
=
\frac{(x^r-1)(x+1)}{x-1}.
$$
Since $x-1$ divides $x^r-1$ for all $r\ge 1$, the right-hand side lies in
$\mathbb{F}_3[x]$. Hence $f(x)\mid (x^{2r}-1)$ in $\mathbb{F}_3[x]$.

Therefore there exists $g(x)\in\mathbb{F}_3[x]$ such that
$$
x^{2r}-1=f(x)g(x).
$$
Evaluating at $A$ and using $f(A)=0$ yields
$$
A^{2r}-I = f(A)g(A)=0,
$$
so $A^{2r}=I_r$. Since $A=M(\delta_r)$, this proves $M(\delta_r)^{2r}=I_r$.
Taking $r=2n+1$ gives $s=2r=4n+2$, hence
$$
\mathrm{Col}_3\bigl(T_{2n+1,\,4n+2}\bigr)=3^{2n+1}.
$$

\smallskip
\noindent\emph{(2) The case $r$ even.}
Let $a_0=1$ and $a_k=(-1)^{k+1}$ for $1\le k\le r-1$.
By Lemma~\ref{lem:delta_matrix} and Corollary~\ref{cor:delta_companion},
the defining polynomial of $A=M(\delta_r)$ is
$$
f(x)=x^r+\sum_{k=1}^{r-1}(-1)^{k+1}x^k+1\in\mathbb{F}_3[x],
$$
and again $f(A)=0$.

Since $r$ is even, we compute
$$
\sum_{k=1}^{r-1}(-1)^{k+1}x^k
=\frac{x(1+x^{r-1})}{x+1},
$$
and hence
$$
f(x)=x^r+1+\frac{x(1+x^{r-1})}{x+1}
=\frac{(x^r+1)(x+1)+x+x^r}{x+1}
=\frac{x^{r+1}+2x^r+2x+1}{x+1}.
$$
Over $\mathbb{F}_3$ this becomes
$$
f(x)=\frac{x^{r+1}-x^r-x+1}{x+1}
=\frac{(x^r-1)(x-1)}{x+1}.
$$
Since $r$ is even, $(-1)^r-1=0$ in $\mathbb{F}_3$, so $x+1$ divides $x^r-1$
and the above expression is a polynomial in $\mathbb{F}_3[x]$.

In characteristic $3$ we have the Frobenius identity
$$
(x^r-1)^3=x^{3r}-1,
$$
so
$$
\frac{x^{3r}-1}{f(x)}
=
\frac{(x^r-1)^3}{(x^r-1)(x-1)/(x+1)}
=
\frac{(x^r-1)^2(x+1)}{x-1}\in\mathbb{F}_3[x],
$$
because $x-1$ divides $x^r-1$.
Thus $f(x)\mid (x^{3r}-1)$, and as in (1) we obtain $A^{3r}=I_r$.
Since $A=M(\delta_r)$, we have $M(\delta_r)^{3r}=I_r$.
Taking $r=2n$ gives $s=3r=6n$, hence
$$
\mathrm{Col}_3\bigl(T_{2n,\,6n}\bigr)=3^{2n}.
$$
This completes the proof.
\end{proof}

Theorem~\ref{thm:torus_tri} shows that maximal tricolorability occurs in infinite torus families
and can be detected via explicit periodicity of the braid action.
We next apply the same fixed-point method to spiral links.

\section{Tricolorability of spiral links}

We next consider a family of links defined by short periodic braid words.
Throughout this section we work over $\mathbb{F}_3$.

Let $r\ge 2$ and let
$$
\epsilon=(\epsilon_1,\epsilon_2,\dots,\epsilon_{r-1}),
\qquad
\epsilon_i\in\{\pm1\}.
$$
For an integer $s\ge 1$, form the braid
$$
\beta(r,s,\epsilon)
=
\left(
\sigma_1^{\epsilon_1}
\sigma_2^{\epsilon_2}
\cdots
\sigma_{r-1}^{\epsilon_{r-1}}
\right)^s
\in B_r.
$$
The closure of this braid,
denoted by
$$
S(r,s,\epsilon)=\widehat{\beta(r,s,\epsilon)},
$$
is called a \emph{spiral link}.

\begin{figure}[h!]
	\centering
	\includegraphics{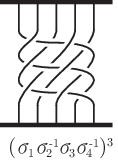}
	\caption{Spiral link}
	\label{fig:spiral}
\end{figure}

The spiral construction admits two natural symmetries on the sign vector
$\epsilon$.

\begin{proposition}
Let $\epsilon=(\epsilon_1,\dots,\epsilon_{r-1})$ and define

$$
-\epsilon=(-\epsilon_1,\dots,-\epsilon_{r-1}),
\qquad
\epsilon^{\mathrm{rev}}
=
(\epsilon_{r-1},\dots,\epsilon_1).
$$

Then the following hold:

\begin{enumerate}
\item[(1)] (Mirror symmetry)
$$
S(r,s,-\epsilon)
\cong
\overline{S(r,s,\epsilon)},
$$
where $\overline{S(r,s,\epsilon)}$ denotes the mirror image of $S(r,s,\epsilon)$.

\item[(2)] (Reversal symmetry)
$$
S(r,s,\epsilon^{\mathrm{rev}})
\cong
S(r,s,\epsilon).
$$
\end{enumerate}
\end{proposition}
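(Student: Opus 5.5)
Both symmetries come from standard operations on braids that are known to change the closure only by a mirror image or by an isotopy of $S^3$.

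For (1), we use the braid-level description of mirroring. Reflecting a braid diagram across the plane of projection switches every crossing, so it sends $\sigma_i^{\pm1}$ to $\sigma_i^{\mp1}$ and preserves the order of letters. This defines an automorphism $\iota$ of $B_r$ with $\iota(\sigma_i)=\sigma_i^{-1}$. The braid relations are preserved under $\iota$, since inverting all generators in $\sigma_i\sigma_{i+1}\sigma_i=\sigma_{i+1}\sigma_i\sigma_{i+1}$ gives the inverse of the same relation read backwards, which is again a relation. The closure of $\iota(\beta)$ is the mirror image of $\widehat{\beta}$. Applying $\iota$ letter by letter to $\beta(r,s,\epsilon)$ gives
$$
\iota\bigl(\beta(r,s,\epsilon)\bigr)=\bigl(\sigma_1^{-\epsilon_1}\cdots\sigma_{r-1}^{-\epsilon_{r-1}}\bigr)^s=\beta(r,s,-\epsilon),
$$
and taking closures proves (1).

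For (2), we use two more involutions and then conjugation. The first is the flip $\phi$ obtained by rotating the braid by $\pi$ about the vertical axis in the projection plane. This reverses the strand order, sending $\sigma_i\mapsto\sigma_{r-i}$ with the same exponent. The closure of $\phi(\beta)$ is obtained from $\widehat{\beta}$ by a rotation of $\mathbb{R}^3$, so it is isotopic to $\widehat{\beta}$. Applying $\phi$ gives
$$
\phi\bigl(\beta(r,s,\epsilon)\bigr)=\bigl(\sigma_{r-1}^{\epsilon_1}\sigma_{r-2}^{\epsilon_2}\cdots\sigma_1^{\epsilon_{r-1}}\bigr)^s .
$$
This still has the generators in decreasing order, so a second step is needed.

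The second step is the reversal antiautomorphism $\rho$, which reads the braid word backwards. Geometrically it comes from a rotation by $\pi$ about a horizontal axis lying in the projection plane. That rotation turns the braid upside down, and it also reverses the strand order and keeps crossing signs. So the composite $\phi\circ\rho$ reads the word backwards while keeping each index and exponent. It sends $w=\sigma_{i_1}^{e_1}\cdots\sigma_{i_k}^{e_k}$ to $\sigma_{i_k}^{e_k}\cdots\sigma_{i_1}^{e_1}$, and it again takes closures to isotopic links. Applying this composite to $\phi(\beta(r,s,\epsilon))$, equivalently reversing the word $\beta(r,s,\epsilon)$ directly, gives
$$
\bigl(\sigma_{r-1}^{\epsilon_{r-1}}\cdots\sigma_1^{\epsilon_1}\bigr)^s .
$$
Applying $\phi$ to this word then gives $(\sigma_1^{\epsilon_{r-1}}\sigma_2^{\epsilon_{r-2}}\cdots\sigma_{r-1}^{\epsilon_1})^s=\beta(r,s,\epsilon^{\mathrm{rev}})$. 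All three operations preserve the isotopy class of the closure, so $S(r,s,\epsilon^{\mathrm{rev}})\cong S(r,s,\epsilon)$, and no conjugation is needed.

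The main obstacle is the geometric bookkeeping. One must check that each rotation really acts on generators as claimed, in particular that crossing signs are preserved under rotations in $\mathbb{R}^3$ and reversed only by the reflection in (1). One must also check that the closure of the rotated braid is the rotated closure. For the word-reversal step I would verify this by drawing the closed braid in a solid torus and rotating the solid torus by $\pi$ about a diameter. This turns the braid upside down and reverses the cyclic direction but not the link type. As a fallback consistency check, I would use the fact from Corollary~\ref{cor:conj_kernel} that the coloring counts agree under conjugation, and compare the matrices $M$ for small $r$.
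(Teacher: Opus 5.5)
Part (1) is the same argument as the paper's. In (2) you depart from the paper, and your version is more complete. The paper applies only $\varphi(\sigma_i)=\sigma_{r-i}$ (your $\phi$) and asserts $\varphi(\beta(r,s,\epsilon))=\beta(r,s,\epsilon^{\mathrm{rev}})$. As you noticed, $\varphi$ actually produces the decreasing word, which is a different braid: already $\varphi(\sigma_1\sigma_2)=\sigma_2\sigma_1\neq\sigma_1\sigma_2$ in $B_3$. Your identity $\beta(r,s,\epsilon^{\mathrm{rev}})=\phi\bigl(\mathrm{rev}(\beta(r,s,\epsilon))\bigr)$ is correct. Since both $\phi$ and word reversal preserve the unoriented closure, it proves (2).

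The geometric bookkeeping for the reversal step is wrong, though.
\begin{itemize}
\item A rotation by $\pi$ about a horizontal axis in the projection plane, $(x,y,z)\mapsto(x,-y,-z)$, fixes the horizontal coordinate, so it does \emph{not} reverse the strand order. Flipping top/bottom and swapping over/under each change the crossing type, and the two changes cancel. So this rotation by itself induces pure word reversal $\sigma_{i_1}^{e_1}\cdots\sigma_{i_k}^{e_k}\mapsto\sigma_{i_k}^{e_k}\cdots\sigma_{i_1}^{e_1}$.
\item The motion that turns the braid upside down \emph{and} reverses strand order is the $180^\circ$ rotation within the projection plane. It realizes $\phi\circ\mathrm{rev}$ and gives (2) in one step.
\item The phrase ``applying this composite to $\phi(\beta)$, equivalently reversing $\beta$ directly'' conflates two different words: reversing $\phi(\beta(r,s,\epsilon))$ already yields $\beta(r,s,\epsilon^{\mathrm{rev}})$. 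This is harmless because $\phi\circ\mathrm{rev}=\mathrm{rev}\circ\phi$, but it should be cleaned up.
\end{itemize}
Your solid-torus check (rotation about a diameter) is the correct justification of the reversal step. Two further remarks. Word reversal reverses the orientation of every component, so your route proves isotopy of unoriented links only; that is all Fox colorings see. Also, $\phi(\beta)=\Delta\beta\Delta^{-1}$ for the half twist $\Delta$, so invariance of the closure under $\phi$ is simply conjugation invariance.

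Finally, the conjugation you mentioned and then discarded gives a shorter, purely algebraic proof with an oriented conclusion. It is also the minimal repair of the paper's one-line argument. Write $P=\sigma_1^{a_1}\cdots\sigma_k^{a_k}$, $x=\sigma_{k+1}^{a_{k+1}}$, $Q=\sigma_{r-1}^{a_{r-1}}\cdots\sigma_{k+2}^{a_{k+2}}$. Since $P$ and $Q$ involve generators whose indices differ by at least $2$, they commute, and so $PQx=QPx\sim PxQ$. Iterating for $k=0,1,\dots,r-3$ converts the decreasing product $\sigma_{r-1}^{a_{r-1}}\cdots\sigma_1^{a_1}$ into the increasing one $\sigma_1^{a_1}\cdots\sigma_{r-1}^{a_{r-1}}$ by conjugation; for instance, in $B_4$ one gets $\sigma_3^a\sigma_2^b\sigma_1^c\sim\sigma_1^c\sigma_3^a\sigma_2^b=\sigma_3^a\sigma_1^c\sigma_2^b\sim\sigma_1^c\sigma_2^b\sigma_3^a$. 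Take $a_j=\epsilon_{r-j}$, pass to $s$-th powers, and use that $\varphi$ is conjugation by $\Delta$. This shows that $\beta(r,s,\epsilon)$ and $\beta(r,s,\epsilon^{\mathrm{rev}})$ are conjugate in $B_r$, so their closures are isotopic as oriented links.
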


\begin{proof}
(1) Replacing each exponent $\epsilon_i$ by $-\epsilon_i$
replaces every crossing $\sigma_i^{\pm1}$ by its inverse.
This corresponds to switching all crossings in the diagram,
which produces the mirror image of the link.
Hence
$$
S(r,s,-\epsilon)
\cong
\overline{S(r,s,\epsilon)}.
$$

(2) Consider the braid-group automorphism
$$
\varphi\colon B_r \to B_r,
\qquad
\varphi(\sigma_i)=\sigma_{r-i}.
$$
This automorphism is induced by a $180^\circ$ rotation of the punctured disk.
Applying $\varphi$ to the braid word gives
$$
\varphi(\beta(r,s,\epsilon))
=
\beta(r,s,\epsilon^{\mathrm{rev}}).
$$
Since such a rotation preserves braid closures up to isotopy,
we obtain
$$
S(r,s,\epsilon^{\mathrm{rev}})
\cong
S(r,s,\epsilon).
$$
\end{proof}

If $\epsilon=(1,1,\dots,1)$, then the braid
$\beta(r,s,\epsilon)$ coincides with $(\sigma_1\cdots\sigma_{r-1})^s$,
and hence $S(r,s,\epsilon)$ is the torus link $T_{r,s}$.
In the remainder of this section we assume that
$\epsilon\neq(1,1,\dots,1)$.

We begin with two explicit examples.

\begin{theorem}\label{thm:spiral_example1}
For any positive integer $k$, the spiral link $S(3,4k,(1,-1))$ is tricolorable.
\end{theorem}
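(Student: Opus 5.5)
The plan is to use the fixed-point criterion of Theorem~\ref{thm:main}, as in the torus case, and to show that the matrix of one period of the braid has order dividing $4$ over $\mathbb{F}_3$. Set $\gamma=\sigma_1\sigma_2^{-1}\in B_3$, so that $\beta(3,4k,(1,-1))=\gamma^{4k}$. Let $B=M(\gamma)$. With the convention $M(\beta\gamma)=M(\gamma)M(\beta)$ of Section~2, we have $B=M(\sigma_2^{-1})M(\sigma_1)$.

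First I will write down the two $3\times 3$ matrices over $\mathbb{F}_3$ and multiply them. Here
$M(\sigma_1)=\begin{pmatrix}0&1&0\\-1&2&0\\0&0&1\end{pmatrix}$ and $M(\sigma_2^{-1})=\begin{pmatrix}1&0&0\\0&2&-1\\0&1&0\end{pmatrix}$. The product, reduced mod $3$, should be
$$
B=\begin{pmatrix}0&1&0\\1&1&2\\2&2&0\end{pmatrix}.
$$
Next I will square it to get
$$
B^2=\begin{pmatrix}1&1&2\\2&0&2\\2&1&1\end{pmatrix},
$$
and then square again to check that $B^4=I_3$ in $\mathrm{GL}(3,\mathbb{F}_3)$.

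The only real risk is an arithmetic or convention slip in these products. In particular, using $M(\sigma_1)M(\sigma_2^{-1})$ instead of $M(\sigma_2^{-1})M(\sigma_1)$ gives a conjugate matrix. By the argument of Corollary~\ref{cor:conj_kernel}, a conjugate matrix has the same order, so the conclusion is robust either way. As a cross-check, I can verify that $(1,1,1)^{T}$ is fixed by $B$, and that the characteristic polynomial of $B$ divides $x^4-1$ over $\mathbb{F}_3$.

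Once $B^4=I_3$ is established, $M(\beta(3,4k,(1,-1)))=B^{4k}=I_3$. Hence $\ker(M(\beta)-I)=\mathbb{F}_3^3$, and Theorem~\ref{thm:main} gives $\mathrm{Col}_3(S(3,4k,(1,-1)))=3^3$. Since $\dim\ker(M(\beta)-I)=3\ge 2$, the criterion at the end of the proof of Theorem~\ref{thm:main} yields a nontrivial Fox $3$-coloring, so the link is tricolorable. In fact it attains the maximal number $27$ of $3$-colorings.
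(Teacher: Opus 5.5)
Your proposal is correct and is essentially the paper's proof: show $M(\sigma_1\sigma_2^{-1})^4=I_3$ over $\mathbb{F}_3$, so $M(\beta(3,4k,(1,-1)))=I_3$ and Theorem~\ref{thm:main} gives $\mathrm{Col}_3=3^3$, which yields a nontrivial coloring. Your explicit matrices $B$ and $B^2$ are correct, and squaring $B^2$ does give $I_3$, so you have written out the ``direct computation'' that the paper leaves implicit.
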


\begin{proof}
Let $A=M(\sigma_1\sigma_2^{-1})\in\mathrm{GL}(3,\mathbb{F}_3)$.
A direct computation shows that $A^4=I$.
Hence
$$
M\bigl((\sigma_1\sigma_2^{-1})^{4k}\bigr)=A^{4k}=I.
$$
By Theorem~\ref{thm:main},
$$
\mathrm{Col}_3\bigl(S(3,4k,(1,-1))\bigr)
=
3^{\dim\ker(I-I)}
=
3^3.
$$
In particular, $S(3,4k,(1,-1))$ admits nontrivial tricolorings.
\end{proof}

\begin{theorem}\label{thm:spiral_example2}
For any positive integer $k$, the spiral links
$$
S(4,2k,(1,-1,1)),
\quad
S(4,3k,(1,-1,1)),
\quad
S(4,3k,(1,1,-1))
$$
are tricolorable.
\end{theorem}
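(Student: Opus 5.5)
The plan is to reduce each of the three claims to a finite matrix computation over $\mathbb{F}_3$ via Theorem~\ref{thm:main}, and then use one elementary observation to handle all $k$ at once. If $B\in\mathrm{GL}(4,\mathbb{F}_3)$ and $Bv=v$ after $m$ steps, i.e.\ $B^m v=v$, then $B^{mk}v=v$. Hence
$$
\ker(B^m-I)\subseteq\ker(B^{mk}-I)\qquad(k\ge1).
$$
So it suffices to show that $\dim\ker(B^m-I)\ge 2$ for the base exponent $m\in\{2,3\}$. By the final remark in the proof of Theorem~\ref{thm:main}, this already gives a nontrivial Fox $3$-coloring of the closure of $\beta^{mk}$. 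No full periodicity $B^N=I$ is needed, unlike in Theorem~\ref{thm:spiral_example1}, although I would check whether it happens.

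\textbf{First braid.} Set $B=M(\sigma_1\sigma_2^{-1}\sigma_3)=M(\sigma_3)M(\sigma_2^{-1})M(\sigma_1)$. I compute it by propagating a label vector $(a_1,a_2,a_3,a_4)$ through the three crossings using the rules of Section~2. The steps are:
\begin{itemize}
\item $\sigma_1$ gives $(a_2,\,2a_2-a_1,\,a_3,\,a_4)$;
\item $\sigma_2^{-1}$ then gives $(a_2,\,a_1+a_2-a_3,\,2a_2-a_1,\,a_4)$ modulo $3$;
\item $\sigma_3$ then gives $(a_2,\,a_1+a_2-a_3,\,a_4,\,a_1+a_2+2a_4)$.
\end{itemize}
Thus
$$
B=\begin{pmatrix}0&1&0&0\\ 1&1&-1&0\\ 0&0&0&1\\ 1&1&0&2\end{pmatrix}.
$$
Next I compute $B^2$ and $B^3$ and row-reduce $B^2-I$ and $B^3-I$ over $\mathbb{F}_3$. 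The goal is to verify that each has rank at most $2$, so that its kernel has dimension at least $2$. The vector $(1,1,1,1)$ always lies in the kernel, so it is enough to exhibit one further independent fixed vector of $B^2$ and one of $B^3$. Alternatively, I could compute the characteristic polynomial of $B$ and look for factors of $x^2-1$ and $x^3-1=(x-1)^3$, but the direct rank computation is more reliable.

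\textbf{Second braid.} For $S(4,3k,(1,1,-1))$ I do the same with $C=M(\sigma_1\sigma_2\sigma_3^{-1})=M(\sigma_3^{-1})M(\sigma_2)M(\sigma_1)$, and show $\dim\ker(C^3-I)\ge2$. In characteristic $3$, $C^3-I=(C-I)^3$, so this amounts to checking that $(C-I)^3$ has rank at most $2$. If $C-I$ turns out to be nilpotent of small index, the Frobenius identity used in Theorem~\ref{thm:torus_tri}(2) would even give $C^3=I$ or $C^9=I$ directly. Corollary~\ref{cor:conj_kernel} can be used to conjugate $C$ into a more convenient form if that helps the computation.

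\textbf{Main obstacle.} The only real risk is arithmetic: the rank computations of $B^2-I$, $B^3-I$ and $(C-I)^3$ must be done carefully modulo $3$. If one of these kernels turns out to be only one-dimensional, the fallback is to find the actual order $N$ of the matrix, which is finite by Corollary~\ref{cor:power}, and inspect the kernels of $B^d-I$ for the divisors $d$ of $N$.
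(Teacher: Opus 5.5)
Your plan is correct and is essentially the paper's argument: compute the braid matrix over $\mathbb{F}_3$, show that a base power ($m=2$ or $3$) has a fixed space of dimension at least $2$, and pass to all multiples via $\ker(B^m-I)\subseteq\ker(B^{mk}-I)$. The paper relies on this same inclusion tacitly, so the orders $A^6=I$ and $B^9=I$ that it also records are not actually needed. Your $B$ is the correct product for the stated convention; the paper's displayed $A$ is the reversed product $M(\sigma_1)M(\sigma_2^{-1})M(\sigma_3)$, i.e.\ the matrix of the conjugate braid $\sigma_3\sigma_2^{-1}\sigma_1$, which is harmless by Corollary~\ref{cor:conj_kernel}. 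The arithmetic you deferred does go through: $B^2-I$ and $B^3-I$ have rank $2$, while for $C=\left(\begin{smallmatrix}0&1&0&0\\0&0&1&0\\2&2&1&2\\1&1&2&0\end{smallmatrix}\right)$ every row of $C^3-I$ equals $(1,2,1,2)$, giving fixed spaces of dimensions $2$, $2$ and $3$.
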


\begin{proof}
We treat the three families separately.

\smallskip
\noindent\emph{(i) The links $S(4,2k,(1,-1,1))$ and $S(4,3k,(1,-1,1))$.}
Let
$$
w=\sigma_1\sigma_2^{-1}\sigma_3\in B_4,
\qquad
A=M(w)\in \mathrm{GL}(4,\mathbb{F}_3).
$$
With the convention $M(\beta\gamma)=M(\gamma)M(\beta)$, we have
$$
A=M(\sigma_3)\,M(\sigma_2^{-1})\,M(\sigma_1).
$$
A direct multiplication over $\mathbb{F}_3$ gives
$$
A=
\begin{pmatrix}
0&2&0&2\\
2&1&0&1\\
0&1&0&0\\
0&0&2&2
\end{pmatrix}.
$$
One checks further that
$$
A^6=I.
$$
In particular, $A^2$ has order $3$, so $A^{2k}\in\{I,A^2,A^4\}$ for every $k\ge 1$.
Moreover,
$$
A^2=
\begin{pmatrix}
1&2&1&0\\
2&2&2&1\\
2&1&0&1\\
0&2&1&1
\end{pmatrix},
\qquad
\dim\ker(A^2-I)=2.
$$
For instance, a row reduction shows that $\ker(A^2-I)$ is spanned by
$$
(1,0,0,1)^{\mathsf{T}},\qquad (0,1,1,0)^{\mathsf{T}}
$$
(over $\mathbb{F}_3$), so it is strictly larger than the $1$-dimensional space of
constant colorings. Hence $\dim\ker(A^{2k}-I)\ge 2$ for all $k\ge 1$, and therefore
$\widehat{w^{\,2k}}=S(4,2k,(1,-1,1))$ is tricolorable by Theorem~\ref{thm:main}.

Similarly, since $A^6=I$, we have $A^{3k}\in\{I,A^3\}$ for all $k\ge 1$.
A direct computation yields
$$
\dim\ker(A^3-I)=2,
$$
and hence $\dim\ker(A^{3k}-I)\ge 2$ for every $k\ge 1$.
Therefore $S(4,3k,(1,-1,1))=\widehat{w^{\,3k}}$ is tricolorable.

\smallskip
\noindent\emph{(ii) The links $S(4,3k,(1,1,-1))$.}
Let
$$
u=\sigma_1\sigma_2\sigma_3^{-1}\in B_4,
\qquad
B=M(u)\in \mathrm{GL}(4,\mathbb{F}_3).
$$
A direct computation gives
$$
B=
\begin{pmatrix}
0&2&0&2\\
2&0&2&1\\
0&1&0&0\\
0&0&1&2
\end{pmatrix}.
$$
One checks that $B^9=I$, and moreover
$$
\dim\ker(B^3-I)\ge 2
\quad
(\text{in fact, }\dim\ker(B^3-I)=3).
$$
For example, $(1,0,0,1)^{\mathsf{T}}$ and $(1,1,0,0)^{\mathsf{T}}$ lie in $\ker(B^3-I)$
and are linearly independent over $\mathbb{F}_3$.
Hence for every $k\ge 1$,
$$
M\!\bigl(u^{3k}\bigr)=B^{3k}=(B^3)^k
$$
has a fixed-point space of dimension at least $2$, so
$\widehat{u^{\,3k}}=S(4,3k,(1,1,-1))$ is tricolorable by Theorem~\ref{thm:main}.

This completes the proof.
\end{proof}

These examples illustrate that periodic behavior in the braid-matrix action can produce infinite families of tricolorable spiral links.
Even though the crossing number grows linearly in $s$, the dimension of the associated linear representation remains fixed, highlighting the computational efficiency of the braid-matrix formulation.

\bibliographystyle{abbrv}
\bibliography{references}

\end{document}